\documentclass{article}

\usepackage{latexsym}
\setlength{\textwidth}{4.5in}
\setlength{\textheight}{7.125in}
\pagestyle{plain}
\setlength{\parskip}{.07in}

\usepackage{amsthm}

\usepackage{fullpage}

\usepackage{hyperref}
\usepackage{graphicx}
\usepackage{pgf,tikz}
\usepackage{amssymb,latexsym}
\usepackage{amsmath}
\usepackage{url}
\usepackage{nicefrac}

\usepackage[footnotesize]{caption}

\usetikzlibrary{trees,decorations}
\usetikzlibrary{arrows,automata}

\newtheorem{theorem}{Theorem}
\newtheorem{lemma}[theorem]{Lemma}

\newcommand{\beq}{\begin{equation}}
\newcommand{\eeq}{\end{equation}}
\newcommand{\bea}{\begin{array}}
\newcommand{\eea}{\end{array}}

\newcommand{\C}{\mathcal{C}}

\newcommand{\F}{\mathcal{F}}

\renewcommand{\S}{\mathcal{S}}
\renewcommand{\C}{\mathcal{C}}
\renewcommand{\L}{\mathcal{D}}
\renewcommand{\F}{\mathcal{F}}
\newcommand{\Ls}{\L^\star}
\newcommand{\Fs}{\F^\star}

\begin{document}

\title{\bf On Enumeration of Dyck--Schr\"oder Paths}

\author{\Large Max A. Alekseyev\\The George Washington University\\Washington, DC, U.S.A.\\Email: \texttt{maxal@gwu.edu}
}


\date{}

\maketitle

\begin{abstract}
We address the problem of enumerating paths in square lattices, where allowed steps include $(1,0)$ and $(0,1)$ everywhere, and $(1,1)$ above the diagonal $y=x$.
We consider two such lattices differing in whether the $(1,1)$ steps are allowed along the diagonal itself.
Our analysis leads to explicit generating functions and an efficient way to compute terms of many sequences in the Online Encyclopedia of Integer Sequences, 
proposed by Clark Kimberling almost two decades ago.
\end{abstract}

\section{Introduction}

The \emph{Catalan numbers} $C_n=\frac{1}{n+1}\binom{2n}{n}$ (sequence \texttt{A000108} in the OEIS~\cite{OEIS}) 
enumerate among other combinatorial objects~\cite{Stanley2015} the paths from $(0,0)$ to $(n,n)$ 
in the integer lattice bounded by lines $y=0$ and $y=x$ with unit steps $(0,1)$ and $(1,0)$, called \emph{Dyck paths}. 
We represent these restrictions as a directed diagram, which we will refer to as the \emph{Dyck diagram} $\L_D$~(Fig.~\ref{fig:CSlat}, left panel).
If we allow diagonal steps $(1,1)$ in this lattice, the paths in it become known as \emph{Schr\"oder paths}~\cite{Petersen2015}, 
and the number of such paths from $(0,0)$ to $(n,n)$ is given by the \emph{large Schr\"oder numbers} $S_n$ (sequence \texttt{A006318} in the OEIS). 
We similarly represent these restrictions as a directed diagram, which refer to as the \emph{Schr\"oder diagram} $\L_S$~(Fig.~\ref{fig:CSlat}, right panel).

\begin{figure}[!t]
\begin{center}
\begin{tabular}{ccc}
\begin{picture}(200,200)
\put(-15,-15){$(0,0)$}
\put(185,180){$(n,n)$}
\thicklines
\multiput(20,0)(20,0){9}{\put(0,0){\circle*{4}}\put(0,0){\vector(0,1){20}}\put(0,20){\circle*{4}}}
\multiput(40,20)(20,0){8}{\put(0,0){\circle*{4}}\put(0,0){\vector(0,1){20}}\put(0,20){\circle*{4}}}
\multiput(60,40)(20,0){7}{\put(0,0){\circle*{4}}\put(0,0){\vector(0,1){20}}\put(0,20){\circle*{4}}}
\multiput(80,60)(20,0){6}{\put(0,0){\circle*{4}}\put(0,0){\vector(0,1){20}}\put(0,20){\circle*{4}}}
\multiput(100,80)(20,0){5}{\put(0,0){\circle*{4}}\put(0,0){\vector(0,1){20}}\put(0,20){\circle*{4}}}
\multiput(120,100)(20,0){4}{\put(0,0){\circle*{4}}\put(0,0){\vector(0,1){20}}\put(0,20){\circle*{4}}}
\multiput(140,120)(20,0){3}{\put(0,0){\circle*{4}}\put(0,0){\vector(0,1){20}}\put(0,20){\circle*{4}}}
\multiput(160,140)(20,0){2}{\put(0,0){\circle*{4}}\put(0,0){\vector(0,1){20}}\put(0,20){\circle*{4}}}
\multiput(180,160)(20,0){1}{\put(0,0){\circle*{4}}\put(0,0){\vector(0,1){20}}\put(0,20){\circle*{4}}}

\multiput(0,0)(0,20){1}{\put(0,0){\circle*{4}}\put(0,0){\vector(1,0){20}}\put(20,0){\circle*{4}}}
\multiput(20,0)(0,20){2}{\put(0,0){\circle*{4}}\put(0,0){\vector(1,0){20}}\put(20,0){\circle*{4}}}
\multiput(40,0)(0,20){3}{\put(0,0){\circle*{4}}\put(0,0){\vector(1,0){20}}\put(20,0){\circle*{4}}}
\multiput(60,0)(0,20){4}{\put(0,0){\circle*{4}}\put(0,0){\vector(1,0){20}}\put(20,0){\circle*{4}}}
\multiput(80,0)(0,20){5}{\put(0,0){\circle*{4}}\put(0,0){\vector(1,0){20}}\put(20,0){\circle*{4}}}
\multiput(100,0)(0,20){6}{\put(0,0){\circle*{4}}\put(0,0){\vector(1,0){20}}\put(20,0){\circle*{4}}}
\multiput(120,0)(0,20){7}{\put(0,0){\circle*{4}}\put(0,0){\vector(1,0){20}}\put(20,0){\circle*{4}}}
\multiput(140,0)(0,20){8}{\put(0,0){\circle*{4}}\put(0,0){\vector(1,0){20}}\put(20,0){\circle*{4}}}
\multiput(160,0)(0,20){9}{\put(0,0){\circle*{4}}\put(0,0){\vector(1,0){20}}\put(20,0){\circle*{4}}}
\end{picture}
&~\qquad~&
\begin{picture}(200,200)
\put(-15,-15){$(0,0)$}
\put(185,180){$(n,n)$}
\thicklines
\multiput(20,0)(20,0){9}{\put(0,0){\circle*{4}}\put(0,0){\vector(0,1){20}}\put(0,20){\circle*{4}}}
\multiput(40,20)(20,0){8}{\put(0,0){\circle*{4}}\put(0,0){\vector(0,1){20}}\put(0,20){\circle*{4}}}
\multiput(60,40)(20,0){7}{\put(0,0){\circle*{4}}\put(0,0){\vector(0,1){20}}\put(0,20){\circle*{4}}}
\multiput(80,60)(20,0){6}{\put(0,0){\circle*{4}}\put(0,0){\vector(0,1){20}}\put(0,20){\circle*{4}}}
\multiput(100,80)(20,0){5}{\put(0,0){\circle*{4}}\put(0,0){\vector(0,1){20}}\put(0,20){\circle*{4}}}
\multiput(120,100)(20,0){4}{\put(0,0){\circle*{4}}\put(0,0){\vector(0,1){20}}\put(0,20){\circle*{4}}}
\multiput(140,120)(20,0){3}{\put(0,0){\circle*{4}}\put(0,0){\vector(0,1){20}}\put(0,20){\circle*{4}}}
\multiput(160,140)(20,0){2}{\put(0,0){\circle*{4}}\put(0,0){\vector(0,1){20}}\put(0,20){\circle*{4}}}
\multiput(180,160)(20,0){1}{\put(0,0){\circle*{4}}\put(0,0){\vector(0,1){20}}\put(0,20){\circle*{4}}}

\multiput(0,0)(0,20){1}{\put(0,0){\circle*{4}}\put(0,0){\vector(1,0){20}}\put(20,0){\circle*{4}}}
\multiput(20,0)(0,20){2}{\put(0,0){\circle*{4}}\put(0,0){\vector(1,0){20}}\put(20,0){\circle*{4}}}
\multiput(40,0)(0,20){3}{\put(0,0){\circle*{4}}\put(0,0){\vector(1,0){20}}\put(20,0){\circle*{4}}}
\multiput(60,0)(0,20){4}{\put(0,0){\circle*{4}}\put(0,0){\vector(1,0){20}}\put(20,0){\circle*{4}}}
\multiput(80,0)(0,20){5}{\put(0,0){\circle*{4}}\put(0,0){\vector(1,0){20}}\put(20,0){\circle*{4}}}
\multiput(100,0)(0,20){6}{\put(0,0){\circle*{4}}\put(0,0){\vector(1,0){20}}\put(20,0){\circle*{4}}}
\multiput(120,0)(0,20){7}{\put(0,0){\circle*{4}}\put(0,0){\vector(1,0){20}}\put(20,0){\circle*{4}}}
\multiput(140,0)(0,20){8}{\put(0,0){\circle*{4}}\put(0,0){\vector(1,0){20}}\put(20,0){\circle*{4}}}
\multiput(160,0)(0,20){9}{\put(0,0){\circle*{4}}\put(0,0){\vector(1,0){20}}\put(20,0){\circle*{4}}}

\multiput(0,0)(20,20){9}{\vector(1,1){20}}
\multiput(20,0)(20,20){8}{\vector(1,1){20}}
\multiput(40,0)(20,20){7}{\vector(1,1){20}}
\multiput(60,0)(20,20){6}{\vector(1,1){20}}
\multiput(80,0)(20,20){5}{\vector(1,1){20}}
\multiput(100,0)(20,20){4}{\vector(1,1){20}}
\multiput(120,0)(20,20){3}{\vector(1,1){20}}
\multiput(140,0)(20,20){2}{\vector(1,1){20}}
\multiput(160,0)(20,20){1}{\vector(1,1){20}}
\end{picture}\\
\\
$\L_D$ & & $\L_S$
\end{tabular}
\end{center}
\caption{Dyck diagram $\L_D$ and Schr\"oder diagram $\L_S$.}
\label{fig:CSlat}
\end{figure}

In 1999, Clark Kimberling contributed the sequences \texttt{A026769}--\texttt{A026790} to the OEIS~\cite{OEIS}, concerning a composition of the Dyck and Schr\"oder diagrams
that below the diagonal $y=x$ represents the Dyck diagram and above the diagonal represents the transposed Schr\"oder diagram.
There are two such diagrams $\L_{DS}$ and $\Ls_{DS}$, where the $(1,1)$ steps along the diagonal are allowed and disallowed, respectively (Fig.~\ref{fig:CSL}).
In the present work, we address the problem of enumerating \emph{Dyck--Schr\"oder paths}, i.e., paths in the diagrams $\L_{DS}$ and $\Ls_{DS}$.
We begin our analysis by recalling some useful facts about the Dyck and Schr\"oder diagrams.

The generating functions for Catalan and Schr\"oder numbers are given by
$$\C(x) = \sum_{n=0}^{\infty} C_n\cdot x^n = \frac{1-\sqrt{1-4x}}{2x}$$
and
$$\S(x) = \sum_{n=0}^{\infty} S_n\cdot x^n = \frac{1-x-\sqrt{1-6x+x^2}}{2x} = \frac{1}{1-x}\cdot \C\left(\frac{x}{(1-x)^2}\right),$$
respectively~\cite{Petersen2015}. 
So the number of paths from $(0,0)$ to $(n,n)$ in $\L_D$ and $\L_S$ is given by $[x^n]\ \C(x)$ and $[x^n]\ \S(x)$, respectively, 
where $[x^n]$ denotes the operator of taking the coefficient of $x^n$.

We will need the following lemma, which states well-known facts about the number of paths and the number of \emph{subdiagonal paths} 
(i.e., paths that lay below the line $y=x$, except possibly for their endpoints) from $(0,0)$ to $(n,k)$ in $\L_D$ and $\L_S$.

\begin{lemma}\label{lem:paths}
For any integers $n\geq k\geq 0$, 
\begin{itemize}
\item[(i)]
the number of paths from $(0,0)$ to $(n,k)$ in $\L_D$ and $\L_S$ equals $[x^k]\ \C(x)^{n-k+1}$ and $[x^k]\ \S(x)^{n-k+1}$, respectively.
\item[(ii)]
the number of subdiagonal paths from $(0,0)$ to $(n,n)$ in $\L_D$ and $\L_S$ equals $[x^{n-1}]\ \C(x)$ and $[x^{n-1}]\ \S(x)$, respectively.
\item[(iii)]
for $n>k$, the number of subdiagonal paths from $(0,0)$ to $(n,k)$ in $\L_D$ and $\L_S$ equals $[x^k]\ \C(x)^{n-k}$ and $[x^k]\ \S(x)^{n-k}$, respectively.
\end{itemize}
\end{lemma}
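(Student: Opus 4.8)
The unifying device I would use is to record, along each path, the quantity $x-y$, which I will call the \emph{height}: an East step $(1,0)$ raises it by $1$, a North step $(0,1)$ lowers it by $1$, and in $\L_S$ a diagonal step $(1,1)$ leaves it unchanged. The defining constraint $y\le x$ of $\L_C$ and $\L_S$ is exactly that the height stay nonnegative, and a path from $(0,0)$ to $(n,k)$ becomes a height walk from $0$ to $n-k$. I would weight every North step and every diagonal step by $x$, so that a path terminating at $(n,k)$ carries the weight $x^{k}$ (since $k$ equals the number of North steps plus the number of diagonal steps in both lattices), and then read off the asserted coefficients from the resulting generating functions.

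\textbf{Part (i).} Put $d=n-k\ge 0$. I would decompose a nonnegative height walk from $0$ to $d$ by its \emph{last passages}: for $j=0,1,\dots,d-1$ let $L_j$ be the last time the height equals $j$. After $L_j$ the walk never again reaches level $j$ yet must still climb to $d$, so the step immediately following $L_j$ is forced to be an East step raising the height from $j$ to $j+1$; the remaining portions are height excursions that begin and end at a common level and stay weakly above it. This splits the path into $d+1$ excursions separated by $d$ forced East steps. Each connecting East step has weight $1$, and each excursion is, up to translation of its baseline, an ordinary Dyck path (in $\L_C$) or Schr\"oder path (in $\L_S$) read with the weighting above; an excursion of size $m$ then carries weight $x^{(\#\text{North}+\#\text{diagonal})}=x^{m}$, so its generating function is $\C(x)$, respectively $\S(x)$. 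Multiplying the $d+1$ factors gives $\C(x)^{d+1}=\C(x)^{n-k+1}$ and $\S(x)^{n-k+1}$, which is (i).

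\textbf{Parts (ii) and (iii).} These I would reduce to (i) by forcing the boundary steps and translating. For (iii), with $n>k$ the endpoint is strictly below the diagonal, so the first step cannot be North or diagonal (each would put the interior vertex $(0,1)$, respectively $(1,1)$, on or above $y=x$); hence it is East, reaching $(1,0)$. What remains is a path from $(1,0)$ to $(n,k)$ confined to $y\le x-1$, and the substitution $x\mapsto x-1$ turns it into an arbitrary path from $(0,0)$ to $(n-1,k)$ in $\L_C$ (resp. $\L_S$) with $n-1\ge k$; part (i) then yields $[x^{k}]\,\C(x)^{(n-1)-k+1}=[x^{k}]\,\C(x)^{n-k}$ and likewise for $\S$. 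For (ii) both endpoints lie on the diagonal, so the path must begin with an East step and, symmetrically, end with a North step (a diagonal first or last step would run along the diagonal). Stripping these two steps and translating reduces the interior to a path from $(0,0)$ to $(n-1,n-1)$ weakly below the diagonal, counted by $[x^{n-1}]\,\C(x)=C_{n-1}$, respectively $[x^{n-1}]\,\S(x)$.

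\textbf{Main obstacle.} The delicate point is the last-passage decomposition in (i): I must check that the marker steps are genuinely forced to be East steps, so that the map to tuples of excursions is a bijection, and in the Schr\"oder case that flat (diagonal) steps do not corrupt the ``last visit'' argument --- a flat step from level $j$ returns to level $j$ and so, exactly like a down step, cannot occur immediately after $L_j$. The remaining care is purely in the weighting: confirming that tracking $x$ per North and per diagonal step makes each excursion contribute precisely $\C(x)$ or $\S(x)$ with the \emph{same} indeterminate (rather than a reparametrization), and that the connecting East steps are weight-neutral. Once this is verified, (ii) and (iii) are immediate consequences of (i), with only the boundary-step forcing and the harmless index shift to justify.
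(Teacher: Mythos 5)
Your proof is correct, but there is nothing in the paper to compare it against: the paper states Lemma~\ref{lem:paths} as a collection of well-known facts and supplies no proof at all, so your argument is a genuinely self-contained addition. Your route --- recoding paths as nonnegative height walks in $h=x-y$, weighting each North and each diagonal step by $x$, and proving (i) by the last-passage decomposition into $n-k+1$ excursions separated by $n-k$ forced East steps --- is a standard and fully rigorous way to obtain the convolution powers $\C(x)^{n-k+1}$ and $\S(x)^{n-k+1}$; your observation that a flat step, exactly like a down step, cannot immediately follow a last visit is precisely what makes the decomposition bijective in the Schr\"oder case, and the weighting does make every excursion contribute $\C(x)$ or $\S(x)$ in the same indeterminate, since the exponent equals the number of North plus diagonal steps. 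The reductions of (ii) and (iii) to (i) by stripping forced boundary steps and translating are likewise sound. One point deserves to be explicit rather than parenthetical: in (ii) with $n=1$ in $\L_S$, the single diagonal step from $(0,0)$ to $(1,1)$ has no interior vertices, so under a purely vertex-based reading of ``subdiagonal'' it would qualify, giving a count of $2$ instead of $S_0=1$. Your remark that a first or last diagonal step ``would run along the diagonal'' silently adopts the edge-based reading (the path, as a curve, must lie strictly below $y=x$ away from its endpoints), and that is the reading the paper requires: in the proof of Theorem~\ref{th:FFs}, single diagonal steps along the diagonal are accounted for separately by the extra summand $1$ in $\C(x)+\S(x)+1$, so they must not already be counted by $\S(x)$. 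With that interpretation stated, your proof is complete.
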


\begin{figure}[!t]
\begin{center}
\begin{tabular}{ccc}
\begin{picture}(200,200)
\put(-15,-15){$(0,0)$}
\put(185,180){$(n,n)$}
\thicklines
\multiput(0,0)(20,0){10}{\put(0,0){\circle*{4}}\multiput(0,0)(0,20){9}{\vector(0,1){20}\put(0,0){\circle*{4}}}}
\multiput(0,0)(0,20){10}{\put(0,0){\circle*{4}}\multiput(0,0)(20,0){9}{\vector(1,0){20}\put(0,0){\circle*{4}}}}

\multiput(0,0)(20,20){9}{\vector(1,1){20}}
\multiput(0,20)(20,20){8}{\vector(1,1){20}}
\multiput(0,40)(20,20){7}{\vector(1,1){20}}
\multiput(0,60)(20,20){6}{\vector(1,1){20}}
\multiput(0,80)(20,20){5}{\vector(1,1){20}}
\multiput(0,100)(20,20){4}{\vector(1,1){20}}
\multiput(0,120)(20,20){3}{\vector(1,1){20}}
\multiput(0,140)(20,20){2}{\vector(1,1){20}}
\multiput(0,160)(20,20){1}{\vector(1,1){20}}
\end{picture}
&~\qquad~&
\begin{picture}(200,200)
\put(-15,-15){$(0,0)$}
\put(185,180){$(n,n)$}
\thicklines
\multiput(0,0)(20,0){10}{\put(0,0){\circle*{4}}\multiput(0,0)(0,20){9}{\vector(0,1){20}\put(0,0){\circle*{4}}}}
\multiput(0,0)(0,20){10}{\put(0,0){\circle*{4}}\multiput(0,0)(20,0){9}{\vector(1,0){20}\put(0,0){\circle*{4}}}}

\multiput(0,20)(20,20){8}{\vector(1,1){20}}
\multiput(0,40)(20,20){7}{\vector(1,1){20}}
\multiput(0,60)(20,20){6}{\vector(1,1){20}}
\multiput(0,80)(20,20){5}{\vector(1,1){20}}
\multiput(0,100)(20,20){4}{\vector(1,1){20}}
\multiput(0,120)(20,20){3}{\vector(1,1){20}}
\multiput(0,140)(20,20){2}{\vector(1,1){20}}
\multiput(0,160)(20,20){1}{\vector(1,1){20}}
\end{picture}\\
\\
$\L_{DS}$ & & $\Ls_{DS}$
\end{tabular}
\end{center}
\caption{Dyck--Schr\"oder diagrams $\L_{DS}$ and $\Ls_{DS}$.}
\label{fig:CSL}
\end{figure}

\section{Enumeration of Dyck--Schr\"oder paths}

\begin{theorem}\label{th:FFs} 
Let $f_n$ and $f_n^\star$ be the number of paths from $(0,0)$ to $(n,n)$ in the diagrams $\L_{DS}$ and $\Ls_{DS}$, respectively. Then
$$\Fs(x) = \sum_{n=0}^{\infty} f_n^\star\cdot x^n = \frac{1}{1-x\cdot(\C(x)+\S(x))}$$
and
$$\F(x) = \sum_{n=0}^{\infty} f_n\cdot x^n = \frac{1}{1-x\cdot(\C(x)+\S(x)+1)} = \frac{\S(x)}{1-x\cdot\C(x)\cdot\S(x)}.$$
\end{theorem}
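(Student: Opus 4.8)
The plan is to decompose an arbitrary path from $(0,0)$ to $(n,n)$ according to the lattice points $(i,i)$ it visits on the main diagonal, and then to read off a generating function by treating the path as a sequence of independent ``excursions.'' Since both endpoints lie on the diagonal, the set of diagonal points the path visits is nonempty, and any two consecutive such points $(i,i)$ and $(j,j)$ (with $i<j$ and no diagonal point strictly between them) cut off a piece of the path. First I would argue that each such piece is of exactly one of three types: a single diagonal step $(i,i)\to(i+1,i+1)$; an excursion whose interior stays strictly above the diagonal (necessarily starting with an up-step and ending with a right-step); or an excursion whose interior stays strictly below the diagonal. The point to verify is that a strictly-above excursion uses diagonal steps only from points with $y>x$ and never from a diagonal point, so it is admissible in both $\L_{CS}$ and $\Ls_{CS}$, whereas the single on-diagonal diagonal step is admissible in $\L_{CS}$ but not in $\Ls_{CS}$.

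Next I would count the excursions of each type spanning $k$ diagonal units, i.e. going from $(i,i)$ to $(i+k,i+k)$. A strictly-below excursion is precisely a subdiagonal Catalan path from $(0,0)$ to $(k,k)$, of which there are $[x^{k-1}]\,\C(x)$ by Lemma~\ref{lem:paths}(ii). For a strictly-above excursion I would transpose coordinates $x\leftrightarrow y$: this turns it into a subdiagonal Schr\"oder path from $(0,0)$ to $(k,k)$, since diagonal steps taken from points above the diagonal map to diagonal steps taken from points below it, exactly where $\L_S$ permits them; hence their number is $[x^{k-1}]\,\S(x)$, again by Lemma~\ref{lem:paths}(ii). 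Marking the diagonal span by powers of $x$, a single below-excursion then contributes $\sum_{k\ge1}[x^{k-1}]\C(x)\,x^k=x\,\C(x)$, a single above-excursion contributes $x\,\S(x)$, and the on-diagonal diagonal step contributes $x$.

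Because the pieces occur in an unrestricted (possibly empty) sequence, the full generating function is $1/(1-M(x))$, where $M(x)$ is the generating function of a single piece. For $\Ls_{CS}$ the admissible pieces are only the two kinds of excursions, so $M^\star(x)=x(\C(x)+\S(x))$ and $\Fs(x)=1/\bigl(1-x(\C(x)+\S(x))\bigr)$; for $\L_{CS}$ the diagonal step is also available, giving $M(x)=x(\C(x)+\S(x)+1)$ and the first stated form of $\F(x)$. To obtain the second form I would invoke the Schr\"oder functional equation $\S(x)=1+x\,\S(x)+x\,\S(x)^2$ (equivalent to the given closed form for $\S$): a direct cross-multiplication shows $\S(x)\bigl(1-x(\C(x)+\S(x)+1)\bigr)=1-x\,\C(x)\,\S(x)$, which rearranges to exactly this functional equation, yielding $\F(x)=\S(x)/\bigl(1-x\,\C(x)\,\S(x)\bigr)$. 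I expect the main obstacle to be the careful justification of the decomposition itself---verifying that the strict one-sided excursions are enumerated by the subdiagonal formulas of Lemma~\ref{lem:paths}(ii) (using transposition in the above-diagonal case), and confirming that the lone on-diagonal diagonal step is the only feature distinguishing $\L_{CS}$ from $\Ls_{CS}$.
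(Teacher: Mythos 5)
Your proposal is correct and follows essentially the same route as the paper: decompose the path at its diagonal visits into strictly-below (Catalan) excursions, strictly-above (transposed Schr\"oder) excursions, and---in $\L_{CS}$ only---single diagonal steps, then apply the sequence construction to get $1/(1-M(x))$ with $M(x)=x(\C(x)+\S(x))$ or $x(\C(x)+\S(x)+1)$, and finish with the Schr\"oder functional equation $x\,\S(x)^2-(1-x)\,\S(x)+1=0$ for the second form of $\F(x)$. Your treatment is in fact slightly more careful than the paper's in two spots the paper leaves implicit: the transposition argument reducing above-excursions to subdiagonal paths in $\L_S$, and the verification that excursions never take a diagonal step from a diagonal point (so they are counted identically in both lattices).
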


\begin{proof} Any path from $(0,0)$ to $(n,n)$ in $\Ls_{DS}$ consists of subdiagonal and/or supdiagonal\footnote{Similarly to subdiagonal paths,
we define supdiagonal paths as those that lay above the diagonal $y=x$, except possibly for their endpoints.}
subpaths from $(p_0,p_0)$ to $(p_1,p_1)$, from $(p_1,p_1)$ to $(p_2,p_2)$, $\ldots$, from $(p_{m-1},p_{m-1})$ to $(p_m,p_m)$, where $m\geq 0$ and
$0=p_0<p_1<\dots<p_m=n$ are integers (in other words, $p_i$ represent the coordinates of vertices where the path visits the diagonal). 
By Lemma~\ref{lem:paths}, for any $k=0,1,\dots,m-1$, the number of subdiagonal and/or supdiagonal paths from $(p_k,p_k)$ to $(p_{k+1},p_{k+1})$ equals
$$[x^{p_{k+1}-p_{k}-1}]\ (\C(x)+\S(x)).$$
Indeed, every such path is either subdiagonal (enumerated by $\C(x)$) or supdiagonal (enumerated by $\S(x)$). 

Hence, the total number of paths in from $(0,0)$ to $(n,n)$ in $\Ls_{DS}$ equals
\[
\begin{split}
& \sum_{m=0}^{\infty} \sum_{0<p_1<\dots<p_{m-1}<n} \prod_{k=0}^{m-1} [x^{p_{k+1}-p_{k}-1}]\ (\C(x)+\S(x)) \\
= & \sum_{m=0}^{\infty} [x^{n-m}]\ (\C(x)+\S(x))^m = [x^n]\ \sum_{m=0}^\infty \left(x\cdot (\C(x)+\S(x))\right)^m \\
= & [x^n]\ \frac{1}{1-x\cdot (\C(x)+\S(x))}.
\end{split}
\]

In the diagram $\L_{DS}$, in addition to subdiagonal and/or supdiagonal subpaths, we need to account for single diagonal steps (when $p_{k+1}=p_k+1$), 
which brings the additional summand 1 to $\C(x)+\S(x)$. That is, the total number of paths in from $(0,0)$ to $(n,n)$ in $\L_{DS}$ equals the coefficient of $x^n$ in
$$\frac{1}{1-x\cdot (\C(x)+\S(x)+1)} = \frac{\S(x)}{1-x\cdot\C(x)\cdot\S(x)}.$$
The last equality follows from the algebraic identity:
$$x\cdot \S(x)^2 - (1-x)\cdot \S(x) + 1 = 0.$$
\end{proof}

\begin{theorem}\label{th:CSpaths}
For integers $n,k$, the number of paths from $(0,0)$ to $(n,k)$ in $\L_{DS}$ equals 
$$
\begin{cases}
[x^k]\ \F(x)\cdot\C(x)^{n-k}, & \text{if}\ n\geq k;\\
[x^n]\ \F(x)\cdot\S(x)^{k-n}, & \text{if}\ n\leq k.
\end{cases}
$$
Similarly, the number of paths from $(0,0)$ to $(n,k)$ in $\Ls_{DS}$ equals 
$$
\begin{cases}
[x^k]\ \Fs(x)\cdot\C(x)^{n-k}, & \text{if}\ n\geq k;\\
[x^n]\ \Fs(x)\cdot\S(x)^{k-n}, & \text{if}\ n\leq k.
\end{cases}
$$
\end{theorem}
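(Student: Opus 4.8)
The plan is to decompose each path according to the last vertex at which it meets the diagonal $y=x$, and then to recognize the two resulting pieces as objects already counted by Theorem~\ref{th:FFs} and Lemma~\ref{lem:paths}. Concretely, fix integers $n\ge k\ge 0$ and consider a path $P$ from $(0,0)$ to $(n,k)$ in $\L_{CS}$; since the endpoint lies on or below the diagonal, $P$ meets the diagonal for the last time at some vertex $(j,j)$ with $0\le j\le k$. This splits $P$ uniquely into an initial segment from $(0,0)$ to $(j,j)$ and a terminal segment from $(j,j)$ to $(n,k)$, and conversely any such pair of segments glues back to a valid path. This last-touch decomposition is the heart of the argument.

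For the initial segment I would observe that it is an arbitrary path from $(0,0)$ to $(j,j)$ in $\L_{CS}$, hence counted by $f_j=[x^j]\,\F(x)$ by definition. For the terminal segment I would note that after the last diagonal touch $P$ stays strictly below $y=x$, and in that region $\L_{CS}$ offers only the steps $(1,0)$ and $(0,1)$; thus the terminal segment is precisely a subdiagonal path in the Catalan lattice from $(j,j)$ to $(n,k)$, equivalently (by translation) from $(0,0)$ to $(n-j,k-j)$. Lemma~\ref{lem:paths}(iii) then counts these as $[x^{k-j}]\,\C(x)^{n-k}$. Summing over the split point and recognizing a Cauchy product,
$$\sum_{j=0}^{k} [x^j]\,\F(x)\cdot [x^{k-j}]\,\C(x)^{n-k} = [x^k]\ \F(x)\cdot \C(x)^{n-k},$$
which is the claimed formula; the degenerate case $n=k$ is absorbed automatically, since $\C(x)^{0}=1$ forces $j=k$ and recovers $f_n$.

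For $n\le k$ the endpoint lies on or above the diagonal, and I would run the mirror-image argument. The last diagonal touch occurs at some $(j,j)$ with $0\le j\le n$, the initial segment again contributes $[x^j]\,\F(x)$, and the terminal segment is now a supdiagonal path built from the three steps $(1,0),(0,1),(1,1)$ available above the diagonal. Reflecting across $y=x$ turns this into a subdiagonal Schr\"oder path from $(0,0)$ to $(k-j,n-j)$, so Lemma~\ref{lem:paths}(iii) gives $[x^{n-j}]\,\S(x)^{k-n}$, and the same convolution yields $[x^n]\,\F(x)\cdot\S(x)^{k-n}$. Finally, the $\Ls_{CS}$ statement should follow verbatim: the only difference between $\L_{CS}$ and $\Ls_{CS}$ is whether a $(1,1)$ step is permitted \emph{on} the diagonal itself, and such steps live entirely inside the initial diagonal-to-diagonal segment; replacing $f_j$ by $f_j^\star=[x^j]\,\Fs(x)$ is therefore the sole change.

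The step I expect to require the most care is the supdiagonal/transposition bookkeeping: I must verify that above the diagonal $\L_{CS}$ really is the transposed Schr\"oder lattice (all three steps present, with the $(1,1)$ steps lying strictly above $y=x$), that reflection across $y=x$ carries supdiagonal paths to subdiagonal Schr\"oder paths while swapping the two coordinates, and that Lemma~\ref{lem:paths}(iii) is invoked with the correct inequality $k-j>n-j$. A secondary point worth stating explicitly is the uniqueness and completeness of the last-touch decomposition, together with the observation that the off-diagonal segments never use an on-diagonal $(1,1)$ step; this is exactly what isolates the $\F$-versus-$\Fs$ distinction and makes the two halves of the theorem share a single proof.
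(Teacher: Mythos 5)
Your proposal is correct and follows essentially the same route as the paper: the paper likewise splits each path at the last vertex $(m,m)$ where it meets the diagonal, counts the initial segment by $\F(x)$ (resp.\ $\Fs(x)$), counts the terminal diagonal-avoiding segment via Lemma~\ref{lem:paths}(iii) as a subdiagonal Catalan or (transposed) Schr\"oder path, and concludes by the same Cauchy-product convolution. Your additional care about the reflection bookkeeping and about on-diagonal $(1,1)$ steps living only in the initial segment is exactly the (implicit) content of the paper's argument.
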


\begin{proof}
Any path from $(0,0)$ to $(n,k)$ in $\L_{DS}$ is formed by a path from $(0,0)$ to $(m,m)$ for some $m\geq 0$ and a path from $(m,m)$ to $(n,k)$ that never visits the diagonal again.
Clearly, this decomposition is unique and so is $m$.
The number of paths from $(0,0)$ to $(m,m)$ equals $[x^m]\ \F(x)$.
If $n>k$, then the number of paths from $(m,m)$ to $(n,k)$ avoiding the diagonal equals 
the number of subdiagonal paths from $(0,0)$ to $(n-m,k-m)$ in $\L_D$, which is $[x^{k-m}]\ \C(x)^{n-k}$ (by Lemma~\ref{lem:paths}).
In this case, the number of paths $(0,0)$ to $(n,k)$ in $\L_{DS}$ equals
$$\sum_{m=0}^{\infty} [x^m]\ \F(x)\cdot [x^{n-m}]\ \C(x)^{n-k} = [x^n]\ \F(x)\cdot\C(x)^{n-k}.$$
Similarly, if $n<k$, then the number of paths from $(m,m)$ to $(n,k)$ avoiding the diagonal equals 
the number of subdiagonal paths from $(0,0)$ to $(k-m,n-m)$ in $\L_S$, which is $[x^{n-m}]\ \S(x)^{k-n}$ (by Lemma~\ref{lem:paths}). In this case, 
the number of paths $(0,0)$ to $(n,k)$ in $\L_{DS}$ equals $[x^n]\ \F(x)\cdot\S(x)^{k-n}$.
It is easy to see that the formulae in both cases are also consistent with the case $n=k$, where the number of paths equals $[x^n]\ \F(x)$ by the definition of $\F(x)$.

The diagram $\Ls_{DS}$ is considered similarly.
\end{proof}

\section{Sequences in the OEIS}

Below we derive formulae for sequences \texttt{A026769}--\texttt{A026779} (concerning the diagram $\Ls_{DS}$) and 
sequences \texttt{A026780}--\texttt{A026790} (concerning the diagram $\L_{DS}$) in the Online Encyclopedia of Integer Sequences~\cite{OEIS}.

\subsection{Sequences \texttt{A026769} and \texttt{A026780}}
$\texttt{A026769}(n,k)$ and $\texttt{A026780}(n,k)$ give the number of paths from $(0,0)$ to $(k,n-k)$ in the diagrams $\Ls_{DS}$ and $\L_{DS}$, respectively.
Formulae for these numbers are given in Theorem~\ref{th:CSpaths}.

\subsection{Sequences \texttt{A026770}--\texttt{A026774} and \texttt{A026781}--\texttt{A026785}}
The $n$-th term of \texttt{A026770}--\texttt{A026774} enumerate paths in $\Ls_{DS}$ from $(0,0)$ to $(n,n)$, $(n-1,n+1)$, $(n-2,n+2)$, $(n-1,n)$, and $(n-2,n+1)$, respectively.
By Theorem~\ref{th:CSpaths}, the ordinary generating function for the number of such paths is $\Fs(x)$, $x\cdot \Fs(x)\cdot\S(x)^2$, $x^2\cdot \Fs(x)\cdot\S(x)^4$, $x\cdot \Fs(x)\cdot\S(x)$, and $x^2\cdot \Fs(x)\cdot\S(x)^3$, respectively.

The sequences \texttt{A026781}--\texttt{A026785} enumerate similar paths in $\L_{DS}$ and have ordinary generating functions
$\F(x)$, $x\cdot \F(x)\cdot\S(x)^2$, $x^2\cdot \F(x)\cdot\S(x)^4$, $x\cdot \F(x)\cdot\S(x)$, and $x^2\cdot \F(x)\cdot\S(x)^3$, respectively.

\subsection{Sequences \texttt{A026775} and \texttt{A026786}}
The terms $\texttt{A026775}(n)$ and $\texttt{A026786}(n)$ give the number of paths from $(0,0)$ to $(\lfloor \nicefrac{n}{2}\rfloor, \lceil \nicefrac{n}{2}\rceil)$ in the diagrams $\Ls_{DS}$ and $\L_{DS}$, respectively.
By Theorem~\ref{th:CSpaths}, these are the coefficients of $x^{\lfloor \nicefrac{n}{2}\rfloor}$ in $\Fs(x)\cdot\S(x)^{n\bmod 2}$ and $\F(x)\cdot\S(x)^{n\bmod 2}$, 
which are the same as the coefficients of $x^n$ in $\Fs(x^2)\cdot(1+x\cdot \S(x^2))$ and $\F(x)\cdot(1+x\cdot \S(x^2))$.

\subsection{Sequences \texttt{A026776} and \texttt{A026787}}
The terms $\texttt{A026776}(n)$ and $\texttt{A026787}(n)$ give the total number of paths from $(0,0)$ to $(i,n-i)$, where $i=0,1,\dots,n$, in the diagrams $\Ls_{DS}$ and $\L_{DS}$, respectively.

\begin{theorem}\label{th:A026776}
The ordinary generating function for \texttt{A026776} is
$$\Fs(x^2)\cdot \left( \frac{1}{1-x\cdot \S(x^2)} + \frac{1}{1-x\cdot \C(x^2)} - 1\right).$$
The ordinary generating function for \texttt{A026787} is
$$\F(x^2)\cdot \left( \frac{1}{1-x\cdot \S(x^2)} + \frac{1}{1-x\cdot \C(x^2)} - 1\right).$$
\end{theorem}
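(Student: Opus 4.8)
The plan is to split the antidiagonal sum $\sum_{i=0}^{n}(\text{paths to }(i,n-i))$ according to whether the endpoint $(i,n-i)$ lies strictly above, strictly below, or exactly on the main diagonal, and to assemble each of the three contributions into a generating function. By Theorem~\ref{th:CSpaths}, for an endpoint with $i\le n-i$ the number of paths in $\Ls_{CS}$ is $[x^i]\ \Fs(x)\cdot\S(x)^{n-2i}$, while for $i\ge n-i$ it is $[x^{n-i}]\ \Fs(x)\cdot\C(x)^{2i-n}$. The key observation is that if I reparametrize the above-diagonal endpoints by the coefficient index $i$ together with the exponent $j=n-2i\ge 0$ (which measures how far the endpoint sits from the diagonal), then the weight $x^n$ factors as $x^{2i+j}=(x^2)^i\cdot x^j$. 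This is precisely what turns the bivariate sum into a product of univariate series and introduces the substitution $x\mapsto x^2$ seen in the target formula.

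First I would collect the above-diagonal contributions. Summing over all $i,j\ge 0$ and using that weighting $[x^i]$ by $(x^2)^i$ is exactly the substitution $x\mapsto x^2$ in the power series, I get
$$\sum_{i,j\ge 0} x^{2i+j}\,[x^i]\big(\Fs(x)\cdot\S(x)^j\big) = \sum_{j\ge 0} x^j\,\Fs(x^2)\cdot\S(x^2)^j = \Fs(x^2)\sum_{j\ge0}\big(x\cdot\S(x^2)\big)^j = \frac{\Fs(x^2)}{1-x\cdot\S(x^2)},$$
the last step being a geometric series. By the symmetric reparametrization of the below-diagonal endpoints through the second coordinate $b=n-i$ and the exponent $j=2i-n\ge 0$, the same computation with $\C$ in place of $\S$ yields $\Fs(x^2)/(1-x\cdot\C(x^2))$.

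The one point requiring care is that both parametrized families include the on-diagonal endpoint $(n/2,n/2)$ when $n$ is even (the case $j=0$), so naively adding the two series double counts the diagonal. I would therefore subtract one copy of the diagonal generating function $\sum_m x^{2m}\,[x^m]\ \Fs(x)=\Fs(x^2)$, obtaining
$$\Fs(x^2)\left(\frac{1}{1-x\cdot\S(x^2)}+\frac{1}{1-x\cdot\C(x^2)}-1\right),$$
which is the asserted expression; the identical argument with $\F$ and $f_n$ in place of $\Fs$ and $f_n^\star$ settles \texttt{A026787}. I expect no genuine obstacle here: the only thing to get right is the bookkeeping of the diagonal overlap and the termwise validity of the $x\mapsto x^2$ substitution, both of which are routine.
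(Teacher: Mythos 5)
Your proposal is correct, and it rests on the same skeleton as the paper's proof: split the antidiagonal endpoints by their position relative to the diagonal, invoke Theorem~\ref{th:CSpaths} for each endpoint, substitute $x\mapsto x^2$, and recognize two geometric series. The difference is in how the summation is organized, and it is worth noting because it falls exactly on the technically delicate step. The paper works coefficientwise at a fixed $n$: it rewrites the finite sum as
$$[x^n]\ \Fs(x^2)\cdot\Bigl(\sum_{i=0}^{\lfloor n/2\rfloor} \bigl(x\cdot \S(x^2)\bigr)^{n-2i} + \sum_{i=\lfloor n/2\rfloor+1}^n \bigl(x\cdot \C(x^2)\bigr)^{2i-n}\Bigr)$$
and then needs a parity-and-degree argument to justify replacing these truncated, parity-restricted sums by the full geometric series $\sum_{m\geq 0}(x\S(x^2))^m$ and $\sum_{m\geq 1}(x\C(x^2))^m$. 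Your global reparametrization by $(i,j)$ with $n=2i+j$ makes the geometric series appear automatically and sidesteps that parity argument entirely, which is arguably cleaner. A second cosmetic difference: in the paper the two index ranges are disjoint (the $\C$-sum runs only over endpoints strictly below the diagonal, so its powers start at $m=1$, producing the $-1$ as $\frac{1}{1-x\cdot\C(x^2)}-1$), whereas you include the diagonal in both families and subtract one copy of $\Fs(x^2)$; the net bookkeeping is identical. Both arguments are valid as formal power series manipulations since each coefficient of $x^n$ receives only finitely many contributions.
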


\begin{proof}
By Theorem~\ref{th:CSpaths}, $\texttt{A026776}(n)$ equals
\[
\begin{split}
& \sum_{i=0}^{\lfloor \nicefrac{n}{2}\rfloor} [x^i]\ \Fs(x)\cdot\S(x)^{n-2i} + \sum_{i=\lfloor \nicefrac{n}{2}\rfloor+1}^n [x^{n-i}]\ \Fs(x)\cdot\C(x)^{2i-n} \\
= & \sum_{i=0}^{\lfloor \nicefrac{n}{2}\rfloor} [x^{2i}]\ \Fs(x^2)\cdot\S(x^2)^{n-2i} + \sum_{i=\lfloor \nicefrac{n}{2}\rfloor+1}^n [x^{2n-2i}]\ \Fs(x^2)\cdot\C(x^2)^{2i-n} \\
= & [x^n]\ \Fs(x^2)\cdot\left(\sum_{i=0}^{\lfloor \nicefrac{n}{2}\rfloor} \left(x\cdot \S(x^2)\right)^{n-2i} + \sum_{i=\lfloor \nicefrac{n}{2}\rfloor+1}^n \left(x\cdot \C(x^2)\right)^{2i-n}\right).
\end{split}
\]
We notice that in the first sum, the powers of $x\cdot \S(x^2)$ go over the nonnegative integers up to $n$ of the same oddness as $n$. Since we are interested only in the coefficient of $x^n$, 
we can drop both these restrictions. Namely, the powers above $n$ have the coefficient of $x^n$ equal zero, while the power $m$ of the opposite oddness than $n$ 
may have nonzero coefficients only for $x$ in powers of the same oddness as $m$. The same arguments apply for the powers of $x\cdot \C(x^2)$, except that in this case they go over the positive integers.
That is, the above expression simplifies to
\[
\begin{split}
& [x^n]\ \Fs(x^2)\cdot\left(\sum_{m=0}^\infty \left(x\cdot \S(x^2)\right)^m + \sum_{m=1}^\infty \left(x\cdot \C(x^2)\right)^m\right) \\
= & [x^n]\ \Fs(x^2)\cdot \left( \frac{1}{1-x\cdot \S(x^2)} + \frac{1}{1-x\cdot \C(x^2)} - 1\right),
\end{split}
\]
which gives the ordinary generating function for \texttt{A026776}.

The generating function for \texttt{A026787} is be obtained by replacing $\Fs(x^2)$ with $\F(x^2)$.
\end{proof}

\subsection{Sequences \texttt{A026777} and \texttt{A026788}}
The term $\texttt{A026777}(n)$ gives the total number of paths in $\Ls_{DS}$ from $(0,0)$ to $(i,n-i)$, where $i=0,1,\dots,\lfloor \nicefrac{n}{2}\rfloor$.
The ordinary generating function for \texttt{A026777} is $\frac{\Fs(x^2)}{1-x\cdot \S(x^2)}$,
which can be easily obtained from our analysis of the sequence \texttt{A026776} above.

The ordinary generating function for \texttt{A026788}, which enumerates similar paths in $\L_{DS}$, equals $\frac{\F(x^2)}{1-x\cdot \S(x^2)}$.

\subsection{Sequences \texttt{A026778} and \texttt{A026789}}
The term $\texttt{A026778}(n)$ gives the total number of paths in $\Ls_{DS}$ from $(0,0)$ to $(i,i-j)$, where $0\leq j\leq i\leq n$.
It is easy to see that
$$\texttt{A026778}(n) = \sum_{m=0}^n \texttt{A026776}(m)$$
and thus the ordinary generating function for \texttt{A026778} can be obtained from the one for \texttt{A026776} by multiplying it by $\frac{1}{1-x}$. That is, 
the ordinary generating function for \texttt{A026778} equals
$$\frac{\Fs(x^2)}{1-x}\cdot \left( \frac{1}{1-x\cdot \S(x^2)} + \frac{1}{1-x\cdot \C(x^2)} - 1\right).$$

The ordinary generating function for \texttt{A026789}, which enumerates similar paths in $\L_{DS}$, equals
$$\frac{\F(x^2)}{1-x}\cdot \left( \frac{1}{1-x\cdot \S(x^2)} + \frac{1}{1-x\cdot \C(x^2)} - 1\right).$$

\subsection{Sequences \texttt{A026779} and \texttt{A026790}}
The terms $\texttt{A026779}(n)$ and $\texttt{A026790}(n)$ give the total number of paths from $(0,0)$ to $(i,n-2i)$, where $i=0,1,\dots,\lfloor \nicefrac{n}{2}\rfloor$, in the diagrams 
$\Ls_{DS}$ and $\L_{DS}$, respectively.

\begin{theorem}\label{th:A026779}
The ordinary generating function for \texttt{A026779} is
$$\Fs(x^3)\cdot \left( \frac{1}{1-x\cdot \S(x^3)} + \frac{1}{1-x^2\cdot \C(x^3)} - 1\right).$$
The ordinary generating function for \texttt{A026790} is
$$\F(x^3)\cdot \left( \frac{1}{1-x\cdot \S(x^3)} + \frac{1}{1-x^2\cdot \C(x^3)} - 1\right).$$
\end{theorem}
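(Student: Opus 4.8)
The plan is to follow the proof of Theorem~\ref{th:A026776} almost verbatim, the only new ingredients being a modulo~$3$ bookkeeping (in place of the modulo~$2$ one) and an asymmetry between the roles of $\S$ and $\C$. First I would apply Theorem~\ref{th:CSpaths} to each endpoint $(i,n-2i)$ appearing in the definition of \texttt{A026779}. Comparing the two coordinates $i$ and $n-2i$, the relevant threshold is $i=\nicefrac{n}{3}$: for $i\leq\lfloor\nicefrac{n}{3}\rfloor$ the endpoint lies weakly above the diagonal and the count is $[x^i]\ \Fs(x)\cdot\S(x)^{n-3i}$, while for $\lfloor\nicefrac{n}{3}\rfloor<i\leq\lfloor\nicefrac{n}{2}\rfloor$ it lies strictly below and the count is $[x^{n-2i}]\ \Fs(x)\cdot\C(x)^{3i-n}$. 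The diagonal endpoint at $i=\nicefrac{n}{3}$, occurring when $3\mid n$, is assigned to the first sum; by the $n=k$ consistency noted in Theorem~\ref{th:CSpaths} it is immaterial which sum absorbs it. This writes $\texttt{A026779}(n)$ as a sum of two finite sums.

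Next I would homogenize both sums to coefficients of $x^n$ via the substitution $x\mapsto x^3$, using $[x^j]\ g(x)=[x^{3j}]\ g(x^3)$. In the first sum, $[x^i]\ \Fs(x)\cdot\S(x)^{n-3i}=[x^{3i}]\ \Fs(x^3)\cdot\S(x^3)^{n-3i}=[x^n]\ \Fs(x^3)\cdot(x\cdot\S(x^3))^{n-3i}$, so one factor of $x$ accompanies each factor of $\S$. In the second sum the coefficient $[x^{n-2i}]$ turns into $[x^{3n-6i}]$, and restoring $[x^n]$ forces the factor $x^{6i-2n}=(x^2)^{3i-n}$; that is, $[x^{n-2i}]\ \Fs(x)\cdot\C(x)^{3i-n}=[x^n]\ \Fs(x^3)\cdot(x^2\cdot\C(x^3))^{3i-n}$. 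This asymmetry, weight $x$ per $\S$ but weight $x^2$ per $\C$, is exactly what produces the two different linear factors $x\cdot\S(x^3)$ and $x^2\cdot\C(x^3)$ in the claimed formula.

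The main obstacle, as in Theorem~\ref{th:A026776}, is to replace the two finite sums by full geometric series without changing the coefficient of $x^n$. Writing $m=n-3i$ (respectively $m=3i-n$), I would argue that (i) terms with $m$ too large contribute nothing, since the lowest-degree term of $\Fs(x^3)\cdot(x\cdot\S(x^3))^m$ is $x^m$ (respectively that of $\Fs(x^3)\cdot(x^2\cdot\C(x^3))^m$ is $x^{2m}$), which exceeds $x^n$ once $m>n$ (respectively $2m>n$); and (ii) terms whose power $m$ lies in the ``wrong'' residue class modulo~$3$ contribute nothing, because $\Fs(x^3)$, $\S(x^3)$, and $\C(x^3)$ involve only exponents divisible by $3$, so all exponents of $\Fs(x^3)\cdot x^m\cdot\S(x^3)^m$ are congruent to $m$, and those of $\Fs(x^3)\cdot x^{2m}\cdot\C(x^3)^m$ are congruent to $2m$, modulo~$3$. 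The delicate point is to verify that the index set actually realized, namely $m\equiv n\pmod 3$ with $0\leq m\leq n$ for the $\S$-sum and $m\equiv -n\pmod 3$ with $1\leq m$ and $2m\leq n$ for the $\C$-sum, coincides with the set of $m$ that contribute to $[x^n]$, so that extending over all $m\geq 0$ and all $m\geq 1$ respectively adds only vanishing terms. This I would settle using $2m\equiv n\pmod 3\iff m\equiv -n\pmod 3$ together with the endpoint bound from $i\leq\lfloor\nicefrac{n}{2}\rfloor$; in particular one checks that the single near-miss value $m=\nicefrac{(n-1)}{2}$ left out for odd $n$ fails the congruence $2m\equiv n\pmod 3$ and is therefore harmless.

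Summing the two geometric series then yields
$$\texttt{A026779}(n)=[x^n]\ \Fs(x^3)\cdot\left(\frac{1}{1-x\cdot\S(x^3)}+\frac{1}{1-x^2\cdot\C(x^3)}-1\right),$$
which is the asserted generating function, the $-1$ recording that the constant term ($m=0$) is counted once rather than twice. Finally, the statement for \texttt{A026790} follows by the identical argument with $\Fs$ replaced by $\F$, exactly as in Theorem~\ref{th:A026776}.
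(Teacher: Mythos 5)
Your proposal is correct and follows essentially the same route as the paper: apply Theorem~\ref{th:CSpaths} to the endpoints $(i,n-2i)$ split at $i=\lfloor \nicefrac{n}{3}\rfloor$, homogenize via $x\mapsto x^3$ to pull out the factors $x\cdot\S(x^3)$ and $x^2\cdot\C(x^3)$, and extend the finite sums to geometric series using the degree and mod-$3$ vanishing arguments. Your explicit verification of the index sets (including the harmless near-miss $m=\nicefrac{(n-1)}{2}$ for odd $n$) is exactly the detail the paper compresses into ``arguments similar to those in the proof of Theorem~\ref{th:A026776}.''
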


\begin{proof} By Theorem~\ref{th:CSpaths}, $\texttt{A026779}(n)$ equals
\[
\begin{split}
& \sum_{i=0}^{\lfloor \nicefrac{n}{3}\rfloor} [x^i]\ \Fs(x)\cdot\S(x)^{n-3i} + \sum_{i=\lfloor \nicefrac{n}{3}\rfloor+1}^{\lfloor \nicefrac{n}{2}\rfloor} [x^{n-2i}]\ \Fs(x)\cdot\C(x)^{3i-n} \\
= & \sum_{i=0}^{\lfloor \nicefrac{n}{3}\rfloor} [x^{3i}]\ \Fs(x^3)\cdot\S(x^3)^{n-3i} + \sum_{i=\lfloor \nicefrac{n}{3}\rfloor+1}^{\lfloor \nicefrac{n}{2}\rfloor} [x^{3n-6i}]\ \Fs(x^3)\cdot\C(x^3)^{3i-n} \\
= & [x^n]\ \Fs(x^3)\cdot\left(\sum_{i=0}^{\lfloor \nicefrac{n}{3}\rfloor} \left(x\cdot \S(x^3)\right)^{n-3i} + \sum_{i=\lfloor \nicefrac{n}{3}\rfloor+1}^{\lfloor \nicefrac{n}{2}\rfloor} \left(x^2\cdot \C(x^3)\right)^{3i-n}\right),
\end{split}
\]
from where we conclude the proof with arguments similar to those in the proof of Theorem~\ref{th:A026776}.
\end{proof}

\section*{Acknowledgements}
The work is supported by the National Science Foundation under grant No. IIS-1462107.
The work was awarded the John Riordan Prize (2015)~\cite{RiordanPrize} by the OEIS Foundation.

\bibliographystyle{acm} 
\bibliography{cat-sch.bib}

\end{document}